\newtheorem{theorem}{Theorem}
\theoremstyle{plain}
\newtheorem{lemma}{Lemma}
\newcommand{\ca}{$C^*$-algebra}
\newcommand{\sg}{\sigma}
\newcommand{\er}{\eqref}
\newcommand{\CH}{\mathcal{H}}
\newcommand{\raw}{\rightarrow}
\newcommand{\phv}{\varphi}
\newcommand{\lm}{\lambda}
\newcommand{\ps}{\psi}
\newcommand{\varep}{\varepsilon}
\newcommand{\til}{\tilde}
\begin{document}
\title{A bounded transform approach to self-adjoint operators: \\ Functional calculus  and affiliated von Neumann algebras
}
\author{ Christian Budde and  Klaas Landsman \\ \mbox{}\hfill \\
Radboud University Nijmegen\\Institute for
Mathematics, Astrophysics and Particle Physics\\
Heyendaalseweg $135$, $6525$ AJ Nijmegen, The Netherlands\\ \\
christianbudde@student.ru.nl, landsman@math.ru.nl }
\maketitle
\begin{abstract}
Spectral theory and functional calculus for unbounded self-adjoint
operators on a Hilbert space are usually treated through von Neumann's Cayley transform.
Based on ideas of Woronowicz, we redevelop this theory from the point of view of multiplier algebras and the so-called
bounded transform (which establishes
 a bijective correspondence between closed operators
and pure contractions). This also leads to a simple account of the affiliation relation between von Neumann algebras and 
self-adjoint operators.
\end{abstract}
\setcounter{tocdepth}{4}
\section{Introductory overview}
The  theory of  unbounded self-adjoint operators on a Hilbert space was  initiated by von Neumann, partly motivated by mathematical problems of quantum mechanics \cite{vN}. The monograph by Schm\"{u}dgen \cite{Schm} presents an excellent survey of the present state of the art. 

Von Neumann's approach was based on the Cayley transform and in its subsequent development the notion of a spectral measure played an important role, especially in defining a functional calculus. We consider this route a bit indirect and will avoid both by  firstly invoking the \emph{bounded transform} instead of the Cayley transform, i.e., the formal expressions
\begin{eqnarray}
S&=&T\sqrt{I+T^2}^{-1};\\
T&=& S\sqrt{I-S^2}^{-1},
\end{eqnarray}
make rigorous sense and provide a bijective correspondence between  self-adjoint operators $T$ and  self-adjoint  pure contractions $S$ (i.e., $\left\|Sx\right\|<\left\|x\right\|$ for each $x\in\mathcal{H}\setminus\left\{0\right\}$); cf.\ \cite{Kauf,Kol,Schm}. 

\noindent Note that the bounded transform $T\mapsto S$ is an operatorial version of the homeomorphism $\mathbb{R}\cong \left(-1,1\right)$ given by the function $b: \mathbb{R}\rightarrow (-1,1)$ and its inverse $u: (-1,1)\rightarrow\mathbb{R}$, defined by
\begin{eqnarray}
b(x)&=&\frac{x}{\sqrt{1+x^2}};\label{defb}\\
u(x)&=&\frac{x}{\sqrt{1-x^2}}.\label{defu}
\end{eqnarray}

Secondly, we replace  spectral measures by  simple arguments using multiplier algebras. 
Our approach is based on the work of Woronowicz \cite{Wor1,Wor2}, whose functional calculus we adopt  and  to some extent complete, at least in the usual context of operators on a Hilbert space (Woronowicz's work was mainly intended to deal with problems involving multiplier algebras and, even more generally, with operators on Hilbert $C^*$-modules \cite{Lanc}). 

If $T$ is bounded (and, by standing assumption, self-adjoint), it is easy to prove the equality
\begin{equation}
C^*(T)=C^*(S),\label{CTCS}
\end{equation}
where $C^*(S)$ is the \ca\ generated within $B(\mathcal{H})$ by $S$ and the unit, etc. Furthermore,
 the spectral mapping theorem implies that the spectra of $S$ and $T$ are related by
\begin{eqnarray}
\sigma(T)&=&\left\{\mu(1-\mu^2)^{-\frac{1}{2}}\mid\mu\in\sigma(S)\right\};\\
\sigma(S)&=&\left\{\lambda(1+\lambda^2)^{-\frac{1}{2}}\mid\lambda\in\sigma(T)\right\},
\end{eqnarray}
preserving point spectra. As to the continuous functional calculus, for $S=S^*\in B(\mathcal{H})$ we have the familiar isomorphism
$C(\sigma(S))\stackrel{\cong}{\rightarrow} C^*(S)$, written $g\mapsto g(S)$, given by the spectral theorem. Assuming $T=T^*\in B(\mathcal{H})$,
the same applies to $T$. These calculi are related by
\begin{equation}
f(T)=(f\circ u)(S), \label{defgT}
\end{equation}
where $f\in C(\sigma(T))$, so that $f\circ u \in C(\sigma(S))$. Self-adjointness is preserved, in that 
\begin{equation}
f(T)^*=f^*(T),\label{gTgT}
\end{equation}
where $f^*(x)=\overline{f(x)}$. In particular, if $f$ is real-valued, then $f(T)$ is self-adjoint.
At the level of von Neumann algebras, defining $W^*(S)=C^*(S)''$ and similarly for $T$, eq.\ \er{CTCS}  gives
\begin{equation}
W^*(T)=W^*(S). \label{WTWS}
\end{equation}
 The functional calculus $f\mapsto f(T)$ may then  be extended to bounded Borel functions $f$ on $\sigma(T)$,  in which case it is still given by  \er{defgT}. We then have $f(T)\in W^*(T)$, whilst \er{gTgT} remains valid; however, instead of the isometric property $\|f(T)\|=\|f\|_{\infty}$ for continuous $f$, we now have $\|f(T)\|\leq\|f\|_{\infty}$ (where $\|\cdot\|_{\infty}$ is the supremum-norm). See, e.g., \cite{Ped}.

Our aim is to generalize these results to the case where $T$ is unbounded.  This indeed turns out to be possible, so that our main results are as follows. Throughout the remainder of this paper we assume that $T^*=T$ is possibly unbounded, with bounded transform $S$. 
\begin{theorem}\label{T1}
The (point) spectra of $T$ and its bounded transform $S$ are related by
\begin{eqnarray}
\sigma(T)&=&\left\{\mu(1-\mu^2)^{-\frac{1}{2}}:\mu\in\tilde{\sigma}(S)\right\};\label{eq1.6}\\
\sigma(S)&=&\left\{\lambda(1+\lambda^2)^{-\frac{1}{2}}:\lambda\in\sigma(T)\right\}^-, \label{eq1.7}
\end{eqnarray}
where $\mbox{}^-$ denotes the closure in $\mathbb{R}$, and we abbreviate
\begin{equation}\tilde{\sigma}(S)=\sigma(S)\cap\left(-1,1\right).\end{equation}
\end{theorem}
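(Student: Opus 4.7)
The plan is to derive both spectral identities from Weyl's criterion for self-adjoint operators, combined with the continuous functional calculus applied to spectral subspaces of $S$. Since $S$ is a bounded self-adjoint contraction, $\sigma(S)\subseteq[-1,1]$, and because $S$ is \emph{pure}, $\pm 1\notin\sigma_p(S)$ (else $\|Sx\|=\|x\|$ for some nonzero $x$). The first goal is to establish the pointwise bijection $\mu\leftrightarrow u(\mu)$ between $\tilde\sigma(S)$ and $\sigma(T)$ (also on eigenvectors), after which \er{eq1.7} will follow by a closure argument handling $\pm 1$.

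For the bijection I would invoke Weyl's criterion: $\alpha\in\sigma(A)$ iff there exist unit vectors $x_n\in\mathrm{dom}(A)$ with $\|(A-\alpha)x_n\|\to 0$. Given $\mu\in\tilde\sigma(S)$, fix $\delta>0$ with $[\mu-\delta,\mu+\delta]\subset(-1,1)$ and let $P=E_S([\mu-\delta,\mu+\delta])$ via the bounded spectral theorem for $S$. Any Weyl sequence $x_n$ for $S$ at $\mu$ satisfies $\delta\|(I-P)x_n\|\leq\|(S-\mu)x_n\|\to 0$, so after renormalization we may assume $x_n\in P\CH$. Since $u$ is bounded on $[\mu-\delta,\mu+\delta]$, we have $P\CH\subseteq\mathrm{dom}(T)$, and on $P\CH$ the formula $T=S(I-S^2)^{-1/2}$ reduces, by multiplicativity of the continuous functional calculus, to $T|_{P\CH}=u(S|_{P\CH})$. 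Writing $u(x)-u(\mu)=h(x)(x-\mu)$ with $h$ continuous and bounded on $[\mu-\delta,\mu+\delta]$, I obtain $\|(T-u(\mu))x_n\|\leq\|h\|_\infty\|(S-\mu)x_n\|\to 0$, so $u(\mu)\in\sigma(T)$. The converse is easier: $k(x)=(b(x)-b(\lambda))/(x-\lambda)$ is continuous and bounded on all of $\mathbb{R}$, so a Weyl sequence $y_n$ for $T$ at $\lambda$ gives $\|(S-b(\lambda))y_n\|\leq\|k\|_\infty\|(T-\lambda)y_n\|\to 0$, placing $b(\lambda)\in\tilde\sigma(S)$. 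Point spectra correspond by the same bijection, since $Sx=\mu x$ with $\mu\in(-1,1)$ is equivalent to $Tx=u(\mu)x$ on the common domain. This establishes \er{eq1.6}.

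For \er{eq1.7}, the previous step gives $b(\sigma(T))=\tilde\sigma(S)\subseteq\sigma(S)$, hence $b(\sigma(T))^-\subseteq\sigma(S)$ by closedness. Conversely, $\sigma(S)\setminus\tilde\sigma(S)\subseteq\{-1,1\}$, and if $1\in\sigma(S)$ then for each $\varepsilon>0$ the projection $E_S((1-\varepsilon,1])$ is nonzero; on its range $T=u(S)$ has spectrum in $u((1-\varepsilon,1))$, which is unbounded above, so $\sigma(T)$ contains $\lambda_n\to\infty$, giving $b(\lambda_n)\to 1$ and $1\in b(\sigma(T))^-$. The case $-1\in\sigma(S)$ is symmetric.

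The main technical obstacle is the justification, directly from $T=S(I-S^2)^{-1/2}$, of the inclusion $P\CH\subseteq\mathrm{dom}(T)$ together with the identity $T|_{P\CH}=u(S|_{P\CH})$ used throughout. On $P\CH$ the operator $S$ has spectrum in a compact subset of $(-1,1)$, so $(I-S^2)^{-1/2}$ is bounded and given unambiguously by the continuous functional calculus for $S|_{P\CH}$, and the identity then reduces to multiplicativity of that calculus; once this is in place, the rest of the proof is essentially the spectral mapping theorem for bounded self-adjoint operators applied on each spectral subspace.
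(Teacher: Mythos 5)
Your argument takes a genuinely different route from the paper: you work with Weyl sequences and spectral projections of $S$, whereas the paper argues purely at the level of invertibility, showing that $\lambda\in\rho(T)$ iff $S-\lambda\sqrt{I-S^2}$ is invertible in $B(\CH)$, hence iff the function $k_\lambda(x)=x-\lambda\sqrt{1-x^2}$ is nonvanishing on $\sigma(S)$ via the Gelfand isomorphism $C^*(S)\cong C(\sigma(S))$; the observation $k_\lambda(\pm1)=\pm1$ then excludes the endpoints and both \eqref{eq1.6} and \eqref{eq1.7} drop out of one equivalence. Your forward direction ($\mu\in\tilde{\sigma}(S)\Rightarrow u(\mu)\in\sigma(T)$) is sound and self-contained: the localization to $P\CH$, the inclusion $P\CH\subseteq\mathcal{D}(T)=\mathcal{R}(\sqrt{I-S^2})$, and the factorization $u(x)-u(\mu)=h(x)(x-\mu)$ use only the bounded spectral theorem for $S$, which the paper also takes as given. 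Your treatment of point spectra is a welcome addition the paper's proof does not spell out.

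There is, however, one step that is circular in the logical order of this paper. In the converse direction you estimate $\|(S-b(\lambda))y_n\|\leq\|k\|_\infty\|(T-\lambda)y_n\|$ with $k(x)=(b(x)-b(\lambda))/(x-\lambda)$; this presupposes the operator identity $S-b(\lambda)I=k(T)(T-\lambda)$ on $\mathcal{D}(T)$ together with $\|k(T)\|\leq\|k\|_\infty$, i.e., a continuous functional calculus for the \emph{unbounded} operator $T$ in which $S=b(T)$. That calculus is exactly what Theorems \ref{T2} and \ref{T3} construct, and its construction depends on Theorem \ref{T1}. The gap is repairable entirely within the bounded calculus of $S$: writing $y=\sqrt{I-S^2}\,z$, one has $(T-\lambda)y=k_\lambda(S)z$ and $(S-b(\lambda))y=m(S)z$ with $m(x)=(x-b(\lambda))\sqrt{1-x^2}$, and since $k_\lambda$ has its unique (simple) zero on $[-1,1]$ at $x=b(\lambda)$ while $k_\lambda(\pm1)=\pm1$, the quotient $m/k_\lambda$ extends to an element of $C([-1,1])$, giving the desired bound. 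A second, more minor imprecision: the range of $E_S((1-\varepsilon,1])$ is \emph{not} contained in $\mathcal{D}(T)$ (the function $(1-x^2)^{-1/2}$ is unbounded there), so the phrase ``on its range $T=u(S)$ has spectrum in $u((1-\varepsilon,1))$'' needs either a further nested projection $E_S((1-\varepsilon,1-\varepsilon'])$ or an appeal to the nonemptiness of the spectrum of the self-adjoint restriction of $T$ to that reducing subspace; the conclusion that $\sigma(T)$ is unbounded above, and hence $1\in b(\sigma(T))^-$, is correct. Once these two points are patched, your proof is valid, at the cost of being considerably longer than the paper's one-line-per-direction Gelfand argument.
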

Note that $\tilde{\sigma}(S)=\sigma(S)$ iff $T$ is bounded (in which case $\sg(S)$ is a compact subset of
$(-1,1)$, since $\pm 1\in\sg(S)$ iff $T$ is unbounded). 
We define the following operator algebras within $B(\mathcal{H})$:
\begin{equation}
C^*_{\bullet}(S)=\left\{g(S):g\in C_{\bullet}(\tilde{\sigma}(S))\right\},\label{defC0S}
\end{equation}
where $\bullet$ is $b$, $c$, or $0$, so that we have defined 
$C^*_c(S)$, $C^*_0(S)$, and $C^*_b(S)$.
Notice that $C(\sigma(S))$ consists of all $g\in
C_b(\tilde{\sigma}(S))$ for which $\lim_{y\rightarrow\pm1}{g(y)}$ exists, where this
limit is $0$ if and only if $g\in C_0(\tilde{\sigma}(S))$. Hence we have the inclusions (of which the first set implies the second)
\begin{eqnarray}
&& C_c(\tilde{\sigma}(S))\subseteq C_0(\tilde{\sigma}(S))\subseteq C(\sg(S))\subseteq C_b(\tilde{\sigma}(S));\\
&& C^*_c(S)\subseteq C^*_0(S)\subseteq C^*(S) \subseteq C^*_b(S),
\end{eqnarray}
with equalities iff $T$ is bounded. This means that $g(S)$ is defined for 
$g\in C_0(\tilde{\sigma}(S))$, and hence  \emph{a fortiori} also for $g\in C_c(\tilde{\sigma}(S))$. Consequently,
$f(T)$ may be defined by \er{defgT} whenever $f\in C_0(\sg(T))$, including  $f\in C_c(\sg(T))$. To pass to the larger class
$f\in C_b(\sg(T))$, we define $C^*_0(S)\mathcal{H}$ as the linear span of
all vectors of the form $g(S)\psi$, where $g\in C_0(\tilde{\sigma}(S))$ and $\psi\in\mathcal{H}$.
Then $C^*_0(S)\mathcal{H}$ is dense in $\mathcal{H}$ (Lemma \ref{Buddelemma}).
 In the spirit of Woronowicz \cite{Lanc,Wor1}, we then initially  define $f(T)$ for $f\in C_b(\sg(T))$ 
 on the domain $C^*_0(S)\mathcal{H}$
 by linear extension of the formula
 \begin{equation}
f_0(T)h(T)\psi=(fh)(T)\psi, \label{deff0T}
\end{equation}
where $h\in C_0(\sg(T))$ and hence also $fh\in  C_0(\sg(T))$, since $C_b(\sg(T))$ is the mutiplier algebra of $C_0(\sg(T))$.
Then $f_0(T)$ is bounded (Lemma \ref{fTbounded}), and we define $f(T)$ as its closure, i.e., 
\begin{equation}
f(T)=f_0(T)^-.\label{fTf0T}
\end{equation}
This also works for $f\in C(\sg(T))$, in which case $f_0(T)$ may no longer be bounded, but remains closable (Lemma \ref{f0Tclosable}), so that we may once again define $f(T)$ as its closure, cf.\ \er{fTf0T}.
We have:
\begin{theorem}\label{T2}
If $f\in C(\sigma(T))$ is real-valued, then $f(T)$ is self-adjoint, i.e., $f_0(T)^-=f_0(T)^*$; more generally,  $f(T)^*=f^*(T)$. Furthermore, the  continuous functional calculus $f\mapsto f(T)$ 
restricts to an isometric $\mbox{}^*$-homomorphism from  $C_0(\sg(T))$ (with supremum-norm) to $C^*(S)$.
\end{theorem}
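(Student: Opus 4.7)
The plan is to establish the three assertions in order: first the isometric $*$-homomorphism property on $C_0(\sg(T))$, then the adjoint formula $f(T)^*=f^*(T)$ for $f\in C_b(\sg(T))$ via a cutoff limit, and finally the self-adjointness of $f(T)$ for real $f\in C(\sg(T))$ (and hence the adjoint formula in full generality) via a resolvent argument. For the first claim, I would identify $f(T)$ concretely when $f\in C_0(\sg(T))$: the composition $f\circ u$ lies in $C_0(\til\sg(S))$ and, extended by $0$ at $\pm 1$, in $C(\sg(S))$, so $(f\circ u)(S)$ is defined through the ordinary continuous functional calculus of the bounded self-adjoint $S$ and belongs to $C^*(S)$. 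Using multiplicativity of that calculus together with \er{deff0T}, one verifies $f_0(T)(h(T)\ps)=(fh)(T)\ps=(f\circ u)(S)\,h(T)\ps$ on the dense subspace $C^*_0(S)\CH$, whence $f(T)=f_0(T)^-=(f\circ u)(S)$. The map $f\mapsto f(T)$ then factors as $f\mapsto f\circ u\mapsto(f\circ u)(S)$; the first arrow is a $*$-isomorphism $C_0(\sg(T))\to C_0(\til\sg(S))$ via pullback along the homeomorphism $u:\til\sg(S)\to\sg(T)$ of Theorem~\ref{T1}, and the second is an isometric $*$-homomorphism into $C^*(S)$ because the continuous calculus of $S$ is isometric and extension by $0$ at $\pm 1$ preserves the supremum norm.

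For the second claim, pick real-valued $\rho_n\in C_c(\mathbb{R})$ with $\rho_n\uparrow 1$ pointwise. The key auxiliary fact is that $\rho_n(T)\to I$ strongly: for any $h\in C_0(\sg(T))$ one has $\rho_n h\to h$ in the supremum norm (standard cutoff argument, as $\rho_n\equiv 1$ on large compacta for $n$ large while $h$ vanishes at infinity), so by the first claim $\rho_n(T)h(T)\ps=(\rho_n h)(T)\ps\to h(T)\ps$ in norm; density of $C^*_0(S)\CH$ (Lemma~\ref{Buddelemma}) together with $\|\rho_n(T)\|\leq 1$ extends this to strong convergence on $\CH$. For $f\in C_b(\sg(T))$, formula \er{deff0T} yields the operator identity $(f\rho_n)(T)=f(T)\rho_n(T)$ in $B(\CH)$; taking Hilbert-space adjoints and applying the first claim to $f\rho_n\in C_0(\sg(T))$ gives $\rho_n(T)\,f(T)^*=((f\rho_n)(T))^*=(f^*\rho_n)(T)=f^*(T)\,\rho_n(T)$, and passing to the strong limit yields $f(T)^*=f^*(T)$.

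For the third claim, the same cutoff produces bounded approximants $f\rho_n\in C_0(\sg(T))$ but $f(T)$ is now typically unbounded, so the strong-limit argument must be replaced by a resolvent argument. For real $f$, the functions $(f\pm i)^{-1}$ lie in $C_0(\sg(T))$ (they vanish at infinity since $|f|$ does), so by the first claim $(f\pm i)^{-1}(T)\in C^*(S)\subseteq B(\CH)$, and in particular $(f+i)^{-1}(T)\CH\subseteq C^*_0(S)\CH=\mathcal{D}(f_0(T))$. The pointwise identity $f\cdot(f+i)^{-1}+i\cdot(f+i)^{-1}=1$ together with the Woronowicz product formula \er{deff0T} then yields $(f_0(T)+iI)(f+i)^{-1}(T)\ps=\ps$ for all $\ps\in\CH$, so $\mathrm{Ran}(f(T)+iI)=\CH$, and likewise for $-i$. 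Combined with the symmetry of $f_0(T)$ on $C^*_c(S)\CH$ (which follows from the first claim applied to the bounded products $fh$ with $h\in C_c$), this forces $f(T)=f_0(T)^-$ to be self-adjoint. The adjoint formula for general complex $f$ then follows by writing $f=f_1+if_2$ with $f_j$ real, together with the inclusion $f^*_0(T)\subseteq f_0(T)^*$ coming from the same pairing computation. The main obstacle I foresee is the rigorous transfer of the algebraic identity $f\cdot(f+i)^{-1}+i\cdot(f+i)^{-1}=1$ into the operator identity $(f_0(T)+iI)(f+i)^{-1}(T)=I$: one must verify that $(f+i)^{-1}(T)$ maps $\CH$ into $\mathcal{D}(f_0(T))$ and that the Woronowicz multiplicativity respects this product formula, both of which rest on the multiplier-algebra identity $C_b=M(C_0)$ and the density of $C^*_0(S)\CH$. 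Beyond this bookkeeping, the proof is routine.
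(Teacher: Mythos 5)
Your first two parts are fine: the identification $f(T)=(f\circ u)(S)$ for $f\in C_0(\sg(T))$ together with the isometric isomorphism $C_0(\sg(T))\cong C_0(\tilde{\sigma}(S))$ gives the isometric $*$-homomorphism into $C^*(S)$ exactly as in the paper, and your cutoff argument for $f(T)^*=f^*(T)$ on $C_b(\sg(T))$ is sound. The central self-adjointness claim, however, contains a genuine error. You assert that for real-valued $f\in C(\sg(T))$ the functions $(f\pm i)^{-1}$ lie in $C_0(\sg(T))$ because $|f|$ blows up at infinity. A continuous real function on the closed unbounded set $\sg(T)\subseteq\mathbb{R}$ need not be proper: take $f=0$, or $f(x)=x\sin x$ on $\sg(T)=\mathbb{R}$, where $(f+i)^{-1}$ equals $-i$ at every point $n\pi$ and hence does not vanish at infinity. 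Consequently $(f+i)^{-1}(T)\mathcal{H}$ is not contained in $C^*_0(S)\mathcal{H}$ (nor in $C^*_c(S)\mathcal{H}$, which is the domain the paper actually assigns to $f_0(T)$ when $f$ is merely continuous rather than bounded), and the operator identity $(f_0(T)+iI)(f+i)^{-1}(T)=I$ on all of $\mathcal{H}$ cannot be obtained as you describe.

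The strategy is salvageable, and once repaired it is genuinely different from the paper's. One always has $(f\pm i)^{-1}\in C_b(\sg(T))$, since $|f(x)\pm i|\geq 1$, and $C_b$ multiplies $C_c$ into itself: for $h\in C_c(\sg(T))$ the product $(f+i)^{-1}h$ lies in $C_c(\sg(T))$, so $((f+i)^{-1}h)(T)\psi\in\mathcal{D}(f_0(T))$ and $(f_0(T)+iI)((f+i)^{-1}h)(T)\psi=h(T)\psi$. Hence $\mathrm{Ran}(f_0(T)\pm iI)\supseteq C^*_c(S)\mathcal{H}$ is dense, which together with the symmetry of $f_0(T)$ (Lemma \ref{f0Tclosable}) yields essential self-adjointness by the standard deficiency-index criterion; you do not need the ranges to equal $\mathcal{H}$. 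In this corrected form your route is more elementary than the paper's, which instead constructs the unitary group $U(t)=(e^{itf})(T)$, verifies strong continuity and differentiability on $C^*_c(S)\mathcal{H}$, and invokes Nelson's theorem (Lemma \ref{Nel}). Finally, your derivation of $f(T)^*=f^*(T)$ covers only $f\in C_b(\sg(T))$; for unbounded complex $f$ the decomposition $f=f_1+if_2$ needs additional domain bookkeeping, though the paper does not spell that case out either.
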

See also Theorem \ref{T3} . 
In addition, the map $f\mapsto f(T)$
has the reassuring special cases
  \begin{eqnarray}
 \mathbf{1}_{\sg(T)}(T)&=& I;\label{unitsc}\\
\mathrm{id}(T)&=&T; \label{idaisa}\\
(\mathrm{id}-z)^{-1}(T)&=& (T-z)^{-1}, \:\: z\in\rho(T),\label{idsgaminz}
\end{eqnarray}
where $ \mathbf{1}_{\sg(T)}(x)=1$ and $\mathrm{id}(x)=x$  ($x\in\sg(T)$), and therefore does what it is supposed to to.

Finding the right analogue of \er{WTWS} for unbounded $T=T^*$ first requires a redefinition of $W^*(T)$, which is standard \cite{Ped}.
If $T$ is unbounded and  $R\in B(\mathcal{H})$, then we say that $R$ and $T$ commute, written $TR\subset RT$,
 if $R\psi\in \mathcal{D}(T)$ 
and $RT\psi=TR\psi$ for any $\psi\in\mathcal{D}(T)$. Let
$\left\{T\right\}'$ be the set of all bounded operators that commute with $T$. If
$T^*=T$, then 
$\left\{T\right\}'$ is a unital, strongly closed
$*$-subalgebra of $B(\mathcal{H})$, and hence a von Neumann algebra \cite{Ped}.
Its commutant
\begin{equation}
W^*(T)=\left\{T\right\}'', \label{defWstarT}
\end{equation}
is a von Neumann algebra, too. If $T$ is bounded, then $W^*(T)$
is the von Neumann algebra generated by $T$, which coincides with $C^*(T)''$. 
As usual, we  call  a closed unbounded operator $X$  \emph{affiliated} to a von Neumann algebra $A\subset B(H)$, written $X\eta A$,
iff $XR\subset RX$ for each $R\in A'$. For example, if $T^*=T$, then  $T\eta W^*(T)$, and if $T\eta A$, then $W^*(T)\subseteq A$; in other words, $W^*(T)$ is the smallest  von Neumann algebra such that $T$ is affiliated to it. 

As a result of independent interest as well as a lemma for Theorem \ref{T3}, we may then adapt \cite[Lemma 5.2.8]{Ped} to the bounded transform:
\begin{theorem}\label{T4}
Let $A\subset B(H)$ be a von Neumann algebra. Then
$T\eta A$ iff $S\in A$.
\end{theorem}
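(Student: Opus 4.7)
\medskip

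\noindent\textbf{Proof plan for Theorem \ref{T4}.} The plan is to prove the two implications of the equivalence separately, in each case reducing an unbounded commutation statement to a bounded one via an algebraic manipulation, and then invoking the ordinary bounded continuous functional calculus to transfer commutation across a square root.

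For the forward direction, suppose $T\eta A$, i.e., $TR\subset RT$ for every $R\in A'$. First I would iterate the relation $TR\subset RT$ on $\mathcal{D}(T^2)$ to obtain $T^2R\psi=RT^2\psi$ for all $\psi\in\mathcal{D}(T^2)$, and hence $(I+T^2)R\psi=R(I+T^2)\psi$ on $\mathcal{D}(T^2)$. Since $T^*=T$ forces $\pm i\in\rho(T)$, the operator $(I+T^2)^{-1}$ is bounded with range $\mathcal{D}(T^2)$, so the previous relation is equivalent to the bounded identity $(I+T^2)^{-1}R=R(I+T^2)^{-1}$ on all of $\mathcal{H}$. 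Bounded continuous functional calculus applied to the positive bounded operator $(I+T^2)^{-1}$ (taking the square root) then yields $(I+T^2)^{-1/2}R=R(I+T^2)^{-1/2}$. Since $S=T(I+T^2)^{-1/2}$ is bounded, $(I+T^2)^{-1/2}\psi\in\mathcal{D}(T)$ for every $\psi\in\mathcal{H}$, and the original relation $TR\subset RT$ applied on this vector yields
\begin{equation*}
SR\psi=T(I+T^2)^{-1/2}R\psi=TR(I+T^2)^{-1/2}\psi=RT(I+T^2)^{-1/2}\psi=RS\psi
\end{equation*}
for every $\psi\in\mathcal{H}$. Thus $S$ commutes with every $R\in A'$, i.e.\ $S\in A''=A$.

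For the reverse direction, suppose $S\in A$, so that $SR=RS$ for all $R\in A'$. Then $R$ commutes with the bounded positive operator $I-S^2$, hence (again by bounded continuous functional calculus) with $(I-S^2)^{1/2}$. Because $S$ is a pure contraction, $I-S^2$ is injective with dense range, so $(I-S^2)^{1/2}$ is injective with dense range as well, and the defining relation between $T$ and $S$ can be written as $T(I-S^2)^{1/2}\phi=S\phi$ for $\phi\in\mathcal{H}$, with $\mathcal{D}(T)=\operatorname{ran}(I-S^2)^{1/2}$. Given $\psi\in\mathcal{D}(T)$, write $\psi=(I-S^2)^{1/2}\phi$; then $R\psi=(I-S^2)^{1/2}R\phi\in\mathcal{D}(T)$, and
\begin{equation*}
TR\psi=T(I-S^2)^{1/2}R\phi=SR\phi=RS\phi=RT\psi,
\end{equation*}
which gives $TR\subset RT$ for every $R\in A'$, i.e.\ $T\eta A$.

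The only delicate step is checking that commutation of $R$ with the unbounded operator $T$ really transfers to commutation with $(I+T^2)^{-1/2}$ (or, in the reverse direction, that the range of $(I-S^2)^{1/2}$ is preserved by $R$); both amount to a careful domain bookkeeping and an application of the bounded continuous functional calculus to extract a square root, which is the main technical content. Everything else is a direct algebraic unravelling of the bounded transform.
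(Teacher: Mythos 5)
Your proof is correct and follows essentially the same route as the paper: commutation is transferred through the bounded positive operator $(I+T^2)^{-1}$ and its square root via the continuous functional calculus in one direction, and through $\sqrt{I-S^2}$ in the other, finishing with the bicommutant theorem. The only cosmetic difference is that the paper first reduces to checking commutation against unitaries $U\in A'$ (so that $UTU^*=T$ becomes an honest operator equality), whereas you work with arbitrary $R\in A'$ and do the domain bookkeeping directly; both are fine.
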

Denoting the (Banach) space of (bounded) Borel functions on $\sg(T)$ (equipped with the supremum-norm) by $\mathcal{B}_{(b)}(\sigma(T))$, 
we may still define $f(T)$  by \er{defgT} and the usual Borel functional calculus for the bounded transform $S$.
\begin{theorem}\label{T3}
 The map $f\mapsto f(T)$   is a norm-decreasing  $\mbox{}^*$-homomorphism from 
$\mathcal{B}_b(\sigma(T))$ to
\begin{equation}
W^*(T)=W^*(S). \label{WstaraWstarb}
\end{equation}
More generally, if $f\in \mathcal{B}(\sigma(T))$, then $f(T)$ is affiliated with $W^*(T)$. 
\end{theorem}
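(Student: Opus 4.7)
My plan is to reduce the entire Borel functional calculus for $T$ to the classical Borel calculus for its bounded transform $S$ via the composition $f\mapsto f\circ u$, using Theorem \ref{T4} to identify the two relevant von Neumann algebras.

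First I would prove the identity \er{WstaraWstarb} as an immediate corollary of Theorem \ref{T4}. Applied with $A=W^*(S)$, the trivial fact $S\in W^*(S)$ and Theorem \ref{T4} give $T\eta W^*(S)$, and hence $W^*(T)\subseteq W^*(S)$ by the minimality characterization of $W^*(T)$. Conversely, $T\eta W^*(T)$ by the definition \er{defWstarT}, so Theorem \ref{T4} with $A=W^*(T)$ yields $S\in W^*(T)$; since $W^*(S)$ is the smallest von Neumann algebra containing $S$, this gives $W^*(S)\subseteq W^*(T)$.

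For $f\in\mathcal{B}_b(\sg(T))$ I would then set $f(T):=(f\circ u)(S)$, where the right-hand side is interpreted via the standard Borel functional calculus for the bounded self-adjoint operator $S$. Since $u$ is continuous on $(-1,1)$, $f\circ u$ is a bounded Borel function on $\til{\sg}(S)$; at the endpoints $\pm 1$ (when present in $\sg(S)$) the function $u$ is undefined, but because $S$ is a pure contraction, neither $+1$ nor $-1$ can be an eigenvalue of $S$, so the spectral projections $E_S(\{\pm 1\})$ vanish and any bounded Borel extension of $f\circ u$ to $\{\pm 1\}$ produces the same element of $W^*(S)$. Theorem \ref{T1} gives $\|f\circ u\|_{\infty}=\|f\|_{\infty}$, and $f\mapsto f\circ u$ is a $*$-homomorphism of Borel function algebras, so composing with the classical norm-decreasing $*$-homomorphic Borel calculus for $S$ produces a norm-decreasing $*$-homomorphism $\mathcal{B}_b(\sg(T))\to W^*(S)=W^*(T)$ as required. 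Consistency with Theorem \ref{T2} on $C_0(\sg(T))$ is verified on the dense domain $C^*_0(S)\CH$ using $(fh)\circ u=(f\circ u)(h\circ u)$ together with the multiplicativity of the calculus for $S$.

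For general (possibly unbounded) $f\in\mathcal{B}(\sg(T))$ I would again set $f(T):=(f\circ u)(S)$, now interpreted via the standard unbounded Borel calculus for $S$: the function $f\circ u$ is Borel on $\til{\sg}(S)$ and finite-valued there, and the classical theory produces a closed, densely defined normal operator $(f\circ u)(S)$ affiliated with $W^*(S)$. Combined with \er{WstaraWstarb}, this immediately gives $f(T)\eta W^*(T)$. The main obstacle I anticipate is the consistency check: one must verify that this composition-based definition of $f(T)$ agrees with the Woronowicz-style closure definition of \er{fTf0T} used for continuous $f$ in Theorem \ref{T2}, and with the analogous construction via a bounded $f_0(T)h(T)\ps=(fh)(T)\ps$ on $C^*_0(S)\CH$ for unbounded $f$. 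This reduces to a direct computation with the multiplicative structure of the Borel calculus for $S$, combined with the pure-contraction property of $S$ ensuring that endpoint ambiguity at $\pm 1$ is irrelevant; all remaining algebraic, norm and affiliation properties then transfer from the well-known bounded-operator theory.
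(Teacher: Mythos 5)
Your proposal is correct and follows essentially the same route as the paper: \eqref{WstaraWstarb} is deduced from Theorem \ref{T4} exactly as in the text, and both parts of the functional-calculus claim are reduced to the classical (bounded and unbounded) Borel calculus for $S$ via the isomorphism $h\mapsto h\circ u$ of Borel function algebras, cf.\ \eqref{ffu2}. Your explicit remark that $E_S(\{\pm 1\})=0$ because $S$ is a pure contraction is a welcome clarification of a point the paper leaves implicit, but it does not change the argument.
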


The remainder of this paper simply consists of the proofs of these theorems.

\medskip
The authors are indebted to Eli Hawkins, Nigel Higson, Jens Kaad, Erik Koelink, Bram Mesland, and Arnoud van Rooij
 for advice (most of which was taken).
\section{Proofs}
This section contains all proofs. We will not repeat the theorems.
\subsection{Proof of Theorem \ref{T1}}
The operator
$\sqrt{1-S^2}$ is a bijection from $\mathcal{H}$ to
$\mathcal{R}(\sqrt{1-S^2})=\mathcal{D}(T)$. Let
$\lambda\in\rho(T)\equiv \mathbb{C}\setminus\sigma(T)$, so that $T-\lambda I$ is a bijection from
$\mathcal{D}(T)$ to $\mathcal{H}$. Thus by composition we have a bijection
$\mathcal{H}\rightarrow\mathcal{H}$; equivalently, $(T-\lambda I)(\sqrt{I-S^2})$ is
invertible, which in turn is equivalent to invertibility of
$S-\lambda\sqrt{I-S^2}$. Thus  $\lambda\in\rho(T)\Longleftrightarrow S-\lambda\sqrt{I-S^2}$ is a bijection, or, expressed 
contrapositively,  $\lambda\in\sigma(T)\Longleftrightarrow
S-\lambda\sqrt{I-S^2}$ is not invertible in $B(\CH)$. This is the case iff
$S-\lambda\sqrt{I-S^2}$ is not invertible in $C^*(S)$, which, using the Gelfand isomorphism
$C^*(S)\cong C(\sg(S))$, in turn is true iff the function
$k_{\lambda}(x)=x-\lambda\sqrt{1-x^2}$  is not invertible in  $C(\sigma(S))$, i.e., iff $0\in\sg(k_{\lambda})$.
Since in $C(X)$ we have 
$\sigma(f)=\mathcal{R}(f)$ (with $X$ a compact Hausdorff space), and 
$\sigma(S)$ is indeed compact and Hausdorff because  
$S$ is bounded, we obtain $\lambda\in\sg(T)$ iff $0\in \mathcal{R}(k_{\lambda})$.
If $\pm 1$ lie in $\sg(S)$ they cannot give rise to this possibility, since  $k_{\lambda}(\pm1)=\pm1$ for each
$\lambda$. Hence we have $0\in \mathcal{R}(k_{\lambda})$ iff $\lm=\mu(1-\mu^2)^{-\frac{1}{2}}$ for some
$\mu\in\sg(S)\cap(-1,1)$, which yields
 \er{eq1.6}. 
 
 The same argument shows that
$\mu\in\sigma(S)\cap\left(-1,1\right)$ comes from $\lambda\in\sigma(T)$. But since
$\sigma(S)$ is compact and hence closed in $\left[-1,1\right]$ we obtain \er{eq1.7}.
\hfill $\Box$
\subsection{Proof of Theorem \ref{T2}}
This proof relies on three lemma's.
\begin{lemma}\label{Buddelemma}
Let $C^*_c(S)\mathcal{H}$ be the linear span of
all vectors of the form $g(S)\psi$, where $g\in C_c(\tilde{\sigma}(S))$ and $\psi\in\mathcal{H}$.
Then $C^*_c(S)\mathcal{H}$ is dense in $H$.
\end{lemma}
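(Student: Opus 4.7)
The plan is to exhibit an explicit approximate unit in $C_c(\tilde{\sigma}(S))$ whose image under the continuous functional calculus converges strongly to the identity on $\mathcal{H}$, with the pureness of the contraction $S$ entering at the decisive step.

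First, for each integer $n\ge 3$, I would define a tent function $\phi_n:[-1,1]\to[0,1]$ that equals $1$ on $[-1+2/n,\,1-2/n]$, vanishes outside $(-1+1/n,\,1-1/n)$, and is linear on the two transition intervals. Its restriction to $\tilde{\sigma}(S)=\sigma(S)\cap(-1,1)$ has support contained in the compact set $[-1+1/n,\,1-1/n]\cap\sigma(S)$, so $\phi_n\in C_c(\tilde{\sigma}(S))$. Because $\phi_n$ already vanishes on a neighbourhood of $\pm1$, it extends by $0$ to a continuous function on all of $\sigma(S)$, and the bounded continuous functional calculus for $S$ then produces operators $\phi_n(S)\in C^*_c(S)$.

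Next, fix $\psi\in\mathcal{H}$ and let $\mu_\psi$ denote the scalar spectral measure of $S$ at $\psi$ (legitimately available since $S$ is bounded self-adjoint). The standard identity
\begin{equation*}
\|\phi_n(S)\psi-\psi\|^2=\int_{\sigma(S)}\bigl(\phi_n(s)-1\bigr)^2\,d\mu_\psi(s)
\end{equation*}
reduces matters to a scalar limit. Since $0\le\phi_n\le1$, the integrand is dominated by the constant $1$; and for every $s\in(-1,1)$ eventually $\phi_n(s)=1$, so the integrand tends to $0$ pointwise on $(-1,1)$. Dominated convergence yields
\begin{equation*}
\lim_{n\to\infty}\|\phi_n(S)\psi-\psi\|^2=\mu_\psi(\{-1,1\}).
\end{equation*}

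The main (really the only) substantive step is to show $\mu_\psi(\{-1,1\})=0$. Since $\mu_\psi(\{\lambda\})=\|E_S(\{\lambda\})\psi\|^2$ and the spectral projection $E_S(\{\lambda\})$ is nonzero iff $\lambda$ is an eigenvalue of $S$, this amounts to showing that $\pm1$ are not eigenvalues of $S$; but $S\chi=\pm\chi$ with $\chi\ne0$ would give $\|S\chi\|=\|\chi\|$, directly contradicting pureness. Hence $\phi_n(S)\psi\to\psi$ in norm for every $\psi\in\mathcal{H}$, and since each $\phi_n(S)\psi$ lies in $C^*_c(S)\mathcal{H}$, this subspace is dense. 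The tent-function construction and the dominated-convergence estimate are routine; the essential conceptual input is the equivalence ``pure contraction $\Leftrightarrow$ no eigenvalue at $\pm1$'', which is where the hypothesis on $S$ does all the work.
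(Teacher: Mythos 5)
Your proposal is correct and follows essentially the same route as the paper: the same tent-function approximate unit in $C_c(\tilde{\sigma}(S))$, pushed through the functional calculus to get strong convergence to the identity. In fact your write-up is slightly more complete than the paper's, which simply asserts $g_n(S)\to\mathbf{1}$ strongly; you make explicit the one point where something could go wrong, namely that the spectral measure of $S$ puts no mass at $\pm1$, and you correctly derive this from $S$ being a \emph{pure} contraction (no eigenvectors at $\pm1$).
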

\begin{proof}Define $g_n:(-1,1)\raw[0,1]$ by putting  $g_n(x)=0$
 for 
$x\in\left(-1,\frac{1}{n}-1\right]\cup\left[1-\frac{1}{n},1\right)$, $g_n(x)=1$ if
$x\in \left[\frac{2}{n}-1,1-\frac{2}{n}\right]$, and linear interpolation in between.
The ensuing sequence converges
pointwise  to the unit $\mathbf{1}$ on $\left(-1,1\right)$. Restricting
each  $g_n$ to $\tilde{\sigma}(S)$, the continuous
functional calculus gives $g_n(S)\rightarrow\mathbf{1}_{\tilde{\sigma}(S)}$ strongly. Therefore, for any $\ps\in\CH$
we have a sequence  $\ps_n=g_n(S)\psi$ in $C^*_c(S)\mathcal{H}$ such that $\ps_n\raw\ps$. 
\end{proof}

\begin{lemma}\label{fTbounded}
For $f\in C_b(\sg(T))$, define an operator $f_0(T)$ on the domain $C^*_0(S)\mathcal{H}$ by \er{deff0T}. Then $f_0(T)$ is bounded, with bound
\begin{equation}
\left\|f(T)\right\|\leq\left\|f\right\|_{\infty}.\label{faboundunbound}
\end{equation}
\end{lemma}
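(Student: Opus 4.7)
The plan is to bootstrap everything from the continuous functional calculus for the bounded self-adjoint operator $S$. By Theorem \ref{T1}, the map $u$ restricts to a homeomorphism $\tilde{\sigma}(S)\cong\sigma(T)$, so for any $h\in C_0(\sigma(T))$ the composition $h\circ u$ lies in $C_0(\tilde{\sigma}(S))$, and by \er{defgT} we have $h(T)=(h\circ u)(S)\in C^*_0(S)$. Thus every generator of the domain $C^*_0(S)\mathcal{H}$ has the form $(h\circ u)(S)\psi$, and $(fh)(T)=((f\circ u)(h\circ u))(S)$ lands back in $C^*_0(S)$ since $(f\circ u)(h\circ u)\in C_0(\tilde{\sigma}(S))$.

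First I would establish the bound on single generators $h(T)\psi$ by a positivity argument inside $C^*(S)$. Using the $*$-homomorphism property of the continuous functional calculus on $S$,
\begin{equation*}
\|f_0(T)h(T)\psi\|^2=\|(fh)(T)\psi\|^2=\langle\psi,|(f\circ u)(h\circ u)|^2(S)\psi\rangle.
\end{equation*}
Pointwise on $\tilde{\sigma}(S)$ one has $|(f\circ u)(h\circ u)|^2\leq\|f\|_\infty^2|h\circ u|^2$, using that $\|f\circ u\|_\infty=\|f\|_\infty$ because $u$ is a bijection $\tilde{\sigma}(S)\to\sigma(T)$. Since both functions lie in $C_0(\tilde{\sigma}(S))$, positivity of the continuous functional calculus on $S$ upgrades this to the operator inequality $|(f\circ u)(h\circ u)|^2(S)\leq\|f\|_\infty^2(|h\circ u|^2)(S)$; pairing with $\psi$ yields $\|f_0(T)h(T)\psi\|\leq\|f\|_\infty\|h(T)\psi\|$.

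Next, to extend the bound to arbitrary $\xi=\sum_i h_i(T)\psi_i\in C^*_0(S)\mathcal{H}$ and, simultaneously, to settle the well-definedness of $f_0(T)$ on linear combinations, I would approximate $f$ by cut-offs. Pick $\chi_n\in C_c(\mathbb{R})$ with $0\leq\chi_n\leq 1$ and $\chi_n\to 1$ pointwise. Then $f\chi_n|_{\sigma(T)}\in C_0(\sigma(T))$ and $(f\chi_n)(T)$ is a bona fide bounded operator in $C^*_0(S)$ with $\|(f\chi_n)(T)\|=\|(f\chi_n)\circ u\|_\infty\leq\|f\|_\infty$ by the isometric continuous functional calculus on $S$. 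For each generator $h(T)\psi$, uniform convergence $(f\chi_n)h\to fh$ on $\sigma(T)$ (here we use that $h$ vanishes at infinity while $f$ is bounded) gives $(f\chi_n)(T)h(T)\psi=((f\chi_n)h)(T)\psi\to(fh)(T)\psi=f_0(T)h(T)\psi$ in norm. Linearity and continuity therefore identify $f_0(T)\xi=\lim_n(f\chi_n)(T)\xi$ for every $\xi=\sum_ih_i(T)\psi_i$; since the right-hand side depends only on the vector $\xi$ (as $(f\chi_n)(T)$ is a genuine linear operator), the value $\sum_i(fh_i)(T)\psi_i$ is independent of the chosen representation. Passing to the limit in $\|(f\chi_n)(T)\xi\|\leq\|f\|_\infty\|\xi\|$ delivers \er{faboundunbound}.

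The only genuine obstacle is the well-definedness of $f_0(T)$ on linear combinations: the defining formula \er{deff0T} is prescribed only on the redundant set of generators $h(T)\psi$. The cut-off device exhibits $f_0(T)$ as the pointwise limit of honest bounded operators $(f\chi_n)(T)\in C^*_0(S)$, which packages consistency and the norm estimate into one step.
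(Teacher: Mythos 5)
Your proof is correct, but it takes a genuinely different route from the paper's. The paper fixes a compact $K\subset\sg(T)$ on whose complement $|fh|<\varep$, splits $\widetilde{fh}=\widetilde{\mathbf{1}_Kfh}+\widetilde{\mathbf{1}_{K^c}fh}$, bounds the first piece by $\|f\|_{\infty}\|h(T)\psi\|$ and the tail by $\varep\|\psi\|$ using contractivity of the \emph{Borel} functional calculus for $S$, and then lets $\varep\raw 0$; this argument only treats single generators $h(T)\psi$ and leaves well-definedness of $f_0(T)$ on the linear span implicit. You instead stay entirely within the continuous functional calculus: the positivity argument $|(f\circ u)(h\circ u)|^2\leq\|f\|_{\infty}^2|h\circ u|^2$ gives the single-generator bound directly, and the cut-off approximation $f\chi_n\raw f$ exhibits $f_0(T)$ as a strong limit, on its domain, of honest elements $(f\chi_n)(T)\in C^*_0(S)$ of norm at most $\|f\|_{\infty}$; this settles both consistency on linear combinations and the estimate \er{faboundunbound} in one stroke --- arguably a cleaner packaging of the point the paper glosses over. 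The only detail to tighten: pointwise convergence $\chi_n\raw\mathbf{1}$ is not by itself enough to make $(f\chi_n)h\raw fh$ uniformly on $\sg(T)$; you should take $\chi_n\equiv 1$ on an exhausting sequence of compacts (or monotone $\chi_n$ plus Dini's theorem), exactly as with the functions $g_n$ in the proof of Lemma \ref{Buddelemma}, after which the argument is complete.
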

\begin{proof} Let $\varepsilon>0$. If $h\in C_0(\sg(T))$, then $fh\in C_0(\sg(T))$, so that 
 we can
find a compact subset $K\subset\sg(T)$ such that
$\left|h(x)f(x)\right|<\varepsilon$ for each $x\notin K$. 
Let
$\tilde{h}=h\circ u$, cf.\ \er{defu}; then $\tilde{h}\in  C_0(\til{\sg}(S))$ whenever $h\in C_0(\sg(T))$; in fact,
 we have  an isometric isomorphism 
\begin{equation}
C_0(\sg(T))\stackrel{\cong}{\rightarrow} C_0(\til{\sg}(S)), \:\: h\mapsto h\circ u. \label{ffu}
\end{equation}
 Contractivity of the Borel functional calculus for bounded
operators on $\mathcal{H}$ gives
$$
\|(\widetilde{\mathbf{1}_{K^c}fh})(S)\psi\|
\leq \|(\widetilde{\mathbf{1}_{K^c}fh})(S)\|\|\psi\|\leq\|\widetilde{\mathbf{1}_{K^c}fh}\|_{\infty}\|\psi\|
<\varep \|\psi\|.
$$
Using also the homomorphism property of the Borel functional calculus, we then find
\begin{eqnarray*}
\|(fh)(T)\psi\| &=& \|(\widetilde{fh})(S)\psi\| \\ &=& \|\widetilde{(\mathbf{1}_Kfh})(S)+(\widetilde{fh}-\widetilde{\mathbf{1}_Kfh})(S)\psi\| 
\\ &\leq&  \|(\widetilde{\mathbf{1}_Kfh})(S)\psi\|+\|(\widetilde{\mathbf{1}_{K^c}fh})(S)\ps\|\\
&=&  \|\widetilde{(\mathbf{1}_Kf)}(S)\tilde{h}(S)\psi\|+\|(\widetilde{\mathbf{1}_{K^c}fh})(S)\ps\| \\
&<&  \|\widetilde{(\mathbf{1}_Kf)}\|_{\infty} \|h(T)\ps\| +\varep \|\psi\|,\\\
&\leq &  \|f\|_{\infty}\,  \|h(T)\ps\| +\varep \|\psi\|, 
\end{eqnarray*}
since $\|\widetilde{(\mathbf{1}_Kf)}\|_{\infty}\leq \|\til{f}\|_{\infty}=\|f\|_{\infty}$. Since the last expression above is independent of $K$, we may let $\varep\raw 0$, obtaining boundedness of $f(T)$ as well as \er{faboundunbound}.
\end{proof}
The last claim in Theorem \ref{T2} now  follows from the continuous functional calculus for $S$
and the isometric isomorphism \er{ffu}.
Although isometry may be lost if we go from $C_0(\sg(T))$ to $C_b(\sg(T))$, it easily follows from \er{deff0T} - \er{fTf0T} that 
the map $f\mapsto f(T)$ at least defines a $\mbox{}^*$-homomorphism $C_b(\sg(T))\raw B(H)$. This property will be used after Lemma \ref{Nel} below. 
\begin{lemma}\label{f0Tclosable}
For $f\in C(\sg(T))$, define an operator $f_0(T)$ on the domain $C^*_c(S)\mathcal{H}$ by \er{deff0T}. 
Then $f_0(T)$ is closable. Moreover, if $f$ is real-valued ($f^*=f$),
then $f_0(T)$ is symmetric.
\end{lemma}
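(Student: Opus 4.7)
The plan is to prove closability by exhibiting a densely defined formal adjoint. Concretely, I will show that $f^*_0(T) \subset f_0(T)^*$, where $f^*_0(T)$ denotes the operator defined on the same dense domain $C^*_c(S)\mathcal{H}$ by formula \er{deff0T} with $f$ replaced by its complex conjugate $f^*$. Since $f^*_0(T)$ has the dense domain $C^*_c(S)\mathcal{H}$ (Lemma \ref{Buddelemma}), this inclusion forces $f_0(T)^*$ to be densely defined, hence $f_0(T)$ is closable. In the real case $f^*=f$, the same inclusion reads $f_0(T)\subset f_0(T)^*$, which is the definition of a symmetric operator.

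The key observation that makes the argument clean is that for any $h\in C_c(\sg(T))$ and $f\in C(\sg(T))$, the product $fh$ lies in $C_c(\sg(T))$, since $h$ has compact support on which the continuous $f$ is automatically bounded. Under the isometric isomorphism \er{ffu} (restricted to $C_c$), the transferred function $\widetilde{fh}=(fh)\circ u$ lies in $C_c(\til{\sg}(S))\subset C(\sg(S))$, so $(fh)(T)=\widetilde{fh}(S)$ is already defined and bounded by the ordinary continuous functional calculus for the bounded self-adjoint operator $S$. Thus \er{deff0T} truly makes sense on $C^*_c(S)\mathcal{H}$, and all subsequent manipulations can be performed inside $C^*(S)\subset B(\mathcal{H})$.

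Concretely, I would take arbitrary vectors $\phi=h_1(T)\phi_0$ and $\psi=h_2(T)\psi_0$ with $h_1,h_2\in C_c(\sg(T))$ and $\phi_0,\psi_0\in\mathcal{H}$, transfer everything to $S$, and use multiplicativity plus the $*$-property of the continuous functional calculus for $S$ to reduce both sides of the putative identity
\[
\langle f_0(T)\phi,\psi\rangle \;=\; \langle \phi, f^*_0(T)\psi\rangle
\]
to the common expression $\langle \phi_0,(\widetilde{f^* h_1^* h_2})(S)\psi_0\rangle$. Commutativity of the scalar functions $h_1^*,f^*,h_2$ on $\sg(T)$ makes the two reductions agree on the nose. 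By sesquilinearity, the identity extends to all of $C^*_c(S)\mathcal{H}\times C^*_c(S)\mathcal{H}$, giving the desired inclusion $f^*_0(T)\subset f_0(T)^*$.

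There is really no serious obstacle here: the entire argument is a bookkeeping exercise that pushes the problem back to the well-understood continuous functional calculus for the bounded operator $S$, essentially mimicking the standard proof that $f(T)^*=f^*(T)$ in the bounded case. The one point that must be checked carefully is that the compact support of $h_1$ and $h_2$ is what allows $f$ (which need not be bounded on all of $\sg(T)$) to enter the calculus for $S$ only through the bounded, compactly supported combinations $\widetilde{fh_i}$ and $\widetilde{f^* h_1^* h_2}$; this is exactly why the lemma is stated on the smaller domain $C^*_c(S)\mathcal{H}$ rather than on $C^*_0(S)\mathcal{H}$.
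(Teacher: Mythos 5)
Your proposal is correct and follows essentially the same route as the paper: both arguments show that the operator $f^*_0(T)$, defined on the dense domain $C^*_c(S)\mathcal{H}$, is a formal adjoint of $f_0(T)$ (the paper's displayed computation \er{ob1}--\er{ob2} is exactly your reduction of both sides to $\langle\psi_2,(\overline{h_2}fh_1)(T)\psi_1\rangle$), whence $\mathcal{D}(f_0(T)^*)$ is dense and closability follows, with symmetry in the real-valued case. Your explicit remark that $fh\in C_c(\sg(T))$ for $h\in C_c(\sg(T))$ --- so that everything reduces to the continuous calculus for the bounded operator $S$ --- is a point the paper leaves implicit, but it is the same argument.
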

\begin{proof}
Suppose that
$h_1(T)\psi_1$ and $h_2(T)\psi_2$ lie in $\mathcal{D}(f_0(T))$. Then we may compute:
\begin{eqnarray}
\left\langle h_2(T)\psi_2,f_0(T)h_1(T)\psi_1\right\rangle&=&\left\langle
\psi_2,\overline{h_2}(T)(fh_1)(T)\psi_1\right\rangle=\left\langle
\psi_2,(\overline{h_2}fh_1)(T)\psi_1\right\rangle;\label{ob1}\\
\left\langle (h_2\overline{f})(T)\psi_2,h_1(T)\psi_1 \right\rangle&=&\left\langle
\psi_2,\overline{(h_2\overline{f})}h_1(T)\psi_1\right\rangle
=\left\langle \psi_2,(\overline{h_2}fh_1)(T)\psi_1\right\rangle.\label{ob2}
\end{eqnarray}
This implies that $\mathcal{D}(f_0(T))\subseteq\mathcal{D}(f_0(T)^*)$
Since  $\mathcal{D}(f_0(T))$ is dense, so is,
$\mathcal{D}(f_0(T)^*)$, which implies that $f_0(T)$ is closable. The second claim is  obvious from \er{ob1} - \er{ob2}.
\end{proof}
\begin{proof} 
To prove Theorem \ref{T2} we use a well-known result of Nelson \cite{Nelson}; see also  
\cite{Ree} (this step was suggested to us by Nigel Higson). For convenience we recall this result (without proof):
\begin{lemma}\label{Nel}
Let $\left\{U(t)\right\}_{t\in\mathbb{R}}$ be a strongly continuous
unitary group of operators on a Hilbert space $\mathcal{H}$. Let
$R:\mathcal{D}(R)\rightarrow\mathcal{H}$ be densely defined and symmetric. Assume
that $\mathcal{D}(R)$ is invariant under $\left\{U(t)\right\}_{t\in\mathbb{R}}$,
i.e. $U(t):\mathcal{D}(R)\rightarrow\mathcal{D}(R)$ for eacht $t$, and also that
$\left\{U(t)\right\}_{t\in\mathbb{R}}$ is strongly differentiable on
$\mathcal{D}(R)$. Then $-idU(t)/dt$ is essentially self-adjoint on
$\mathcal{D}(R)$ and its closure is the self-adjoint generator of
$\left\{U(t)\right\}_{t\in\mathbb{R}}$ (given by Stone's Theorem).
 In particular,  if
$(dU(t)/dt) \psi=iRU(t)\psi$ for each $\psi\in\mathcal{D}(R)$, then
$R$ is essentially self-adjoint. 
\end{lemma}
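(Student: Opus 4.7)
The plan is to reduce the main claim to showing that the self-adjoint generator of $\{U(t)\}$ produced by Stone's theorem is the closure of its own restriction to $\mathcal{D}(R)$. Stone's theorem provides a unique self-adjoint operator $A$ whose one-parameter unitary group is $\{U(t)\}$, with natural domain consisting of those $\psi$ for which $U(\cdot)\psi$ is strongly differentiable at the origin. The differentiability hypothesis therefore embeds $\mathcal{D}(R)\subseteq\mathcal{D}(A)$, with $-i\,dU(t)/dt|_{t=0}\,\psi = A\psi$ for $\psi\in\mathcal{D}(R)$. Writing $B := A|_{\mathcal{D}(R)}$, the theorem amounts to the equality $\bar{B}=A$, i.e.\ the assertion that $\mathcal{D}(R)$ is a core for $A$.

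I would verify essential self-adjointness of $B$ through the classical range (deficiency-index) criterion, so that it suffices to show $\ker(B^*\mp iI)=\{0\}$. Given $\varphi\in\ker(B^*-iI)$, so that $\langle B\psi,\varphi\rangle=\langle\psi,i\varphi\rangle$ for every $\psi\in\mathcal{D}(R)$, consider for fixed $\psi\in\mathcal{D}(R)$ the scalar function $g(t) := \langle U(t)\psi,\varphi\rangle$. Invariance of $\mathcal{D}(R)$ under the group keeps $U(t)\psi\in\mathcal{D}(R)=\mathcal{D}(B)$, and strong differentiability on $\mathcal{D}(R)$ allows differentiation under the inner product, giving $g'(t)=i\langle BU(t)\psi,\varphi\rangle$. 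Substituting the adjoint relation applied to the test vector $U(t)\psi$ yields a first-order linear ODE in $g$ with exponential solution $g(t)=g(0)e^{\pm t}$. Since $|g(t)|\leq\|\psi\|\|\varphi\|$ by unitarity, this exponential must vanish identically, forcing $g(0)=\langle\psi,\varphi\rangle=0$; density of $\mathcal{D}(R)$ then gives $\varphi=0$. The kernel of $B^*+iI$ is handled by the same argument with a sign reversal.

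Once $B$ is known to be essentially self-adjoint, its closure $\bar{B}$ is a self-adjoint extension of the already self-adjoint operator $A$, and since self-adjoint operators admit no proper symmetric extensions, $\bar{B}=A$; this is the first assertion. The \emph{in particular} clause then follows at once: the hypothesis $dU(t)/dt\,\psi = iRU(t)\psi$ on $\mathcal{D}(R)$ specializes at $t=0$ to $R\psi = -i\,dU(t)/dt|_{t=0}\,\psi = B\psi$, identifying $R$ with $B$, which has just been shown essentially self-adjoint.

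The main obstacle is justifying the two differentiation steps that generate the ODE. One must differentiate $g(t)=\langle U(t)\psi,\varphi\rangle$ rigorously as a scalar function of $t$ (not only at $t=0$), and then apply the adjoint relation for $B^*$ at every $t$, not merely at the origin. The former is covered by strong differentiability of $U(\cdot)\psi$ together with continuity of the inner product; the latter is where invariance of $\mathcal{D}(R)$ under $\{U(t)\}$ becomes decisive, since it is precisely what lets $U(t)\psi$ serve as a legal test vector in the defining property of $B^*$.
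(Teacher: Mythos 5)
The paper does not actually prove this lemma: it is quoted from Nelson \cite{Nelson} and Reed--Simon \cite{Ree} explicitly ``without proof,'' so there is no internal argument to compare against. Your proof is the standard invariant-core argument found in those references --- identify $B=A|_{\mathcal{D}(R)}$ with $A$ the Stone generator, take $\varphi$ in a deficiency space of $B$, differentiate $g(t)=\langle U(t)\psi,\varphi\rangle$ using the invariance of $\mathcal{D}(R)$ to apply the adjoint relation at every $t$, and conclude from boundedness of $g$ versus its exponential growth that $\varphi=0$ --- and it is correct, including the reduction of the ``in particular'' clause to $R=B$ by evaluating the hypothesis at $t=0$. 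One phrase is reversed: since $B\subseteq A$ one has $\bar{B}\subseteq A$, so it is $A$ that is a (self-adjoint, hence non-proper) symmetric extension of the self-adjoint operator $\bar{B}$, not the other way around; the conclusion $\bar{B}=A$ is unaffected.
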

Set $R=f_0(T)$ for $f\in C(\sigma(T))$, so that
\begin{equation}
\mathcal{D}(R)=C_c^*(S)\CH,\label{domR}
\end{equation}
and for each $t\in\mathbb{R}$ define
$U(t)$ via the (bounded) function $x\mapsto \exp(itf(x))$ on $\sg(T)$, that is, for
 $h\in C_c(\sigma(T))$ and $\psi\in\CH$, we initially define
 \begin{equation}
U_0(t)h(T)\psi=(e^{itf}h)(T)\psi.  \label{defU0}
\end{equation}
Then $U_0$ bounded by Lemma \ref{fTbounded}, and we define $U(t)$ as the closure of $U_0(t)$. 
The remark before Lemma \ref{f0Tclosable} then implies that  $t\mapsto U(t)$ defines a unitary representation of $\mathbb{R}$ on $\CH$. Strong continuity of this representation follows from an $\varep/3$ argument. First, for
\begin{equation}
\phv=h(T)\psi,  \label{phvisTpsi}
\end{equation}
assuming $\|\psi\|=1$ for simplicity,  eqs.\ \er{defU0} and \er{faboundunbound} give 
\begin{equation}
\|U(t)\phv-\phv\|\leq \|e^{itf}h-h\|_{\infty} \leq \| h\|_{\infty} \|e^{itf}-\mathbf{1}\|_{\infty}^{(K)}, \label{Uest1}
\end{equation}
where $K$ is the (compact) support of $h$ in $\sg(T)$. Since the exponential function is uniformly convergent on any compact set,
this gives $\lim_{t\raw 0}\|U(t)\phv-\phv\|=0$ for  $\phv$ of the form \er{phvisTpsi}; taking finite linear combinations thereof gives the same result for any $\phv\in  C_c^*(S)\CH$.
Thus for any $\varep>0$ we can find $\delta>0$ so that $\|U(t)\phv-\phv\|<\varep/3$ whenever $|t|<\delta$.
 For general $\psi'\in H$, we find $\phv\in C_c^*(S)H$ such that $\|\phv-\ps'\|<\varep/3$, and estimate
\begin{eqnarray*}
 \| U(t)\psi'-\ps'\|&\leq & \| U(t)\psi'-U(t)\phv\| + \|U(t)\phv-\phv\|+\|\phv-\ps'\|\\
 &\leq &\varep/3+\varep/3+\varep/3=\varep,
\end{eqnarray*}
since $\| U(t)\psi'-U(t)\phv\|=\|\ps'-\phv\|$ by unitarity of $U(t)$. Thus  $\lim_{t\raw 0}\|U(t)\psi-\psi\|=0$ for 
any $\psi\in\CH$, so that the unitary representation $t\mapsto U(t)$ is strongly continuous. Similarly,
\begin{equation}
\left\|\frac{U(t+s)\phv-U(t)\phv}{s}-iRU(t)\phv\right\|
\leq\left\|\frac{e^{isf}h-h}{s}-ifh\right\|_{\infty},
\end{equation}
assuming \er{phvisTpsi},
so that by the same argument as in \er{Uest1} we obtain 
\begin{equation}
\frac{dU(t)}{dt}\phv=iRU(t)\phv, 
\end{equation}
 initially for any $\phv$ of the form \er{phvisTpsi}, and hence, taking finite sums,  for any
$\phv\in\mathcal{D}(R)$, cf.\ \er{domR}.
The final part of Lemma \ref{Nel} then shows that  $f_0(T)$ is essentially self-adjoint on its domain $C_c^*(S)\CH$.
Its closure $f(T)$ is therefore self-adjoint, and Theorem \ref{T2} is proved. 
\end{proof}

We now prove the examples \er{unitsc} - \er{idsgaminz}, of which the first is trivial. 
 Writing $T_0$ for the operator $\mathrm{id}_0(T)$, 
the definition  \er{deff0T}  gives $$T_0\phv=T\phv$$ for $\phv\in \mathcal{D}(T_0)= C^*_c(S)\mathcal{H}$.
Let $\psi\in \mathcal{D}(T_0^-)$, so that there is a sequence $(\phv_n)$ in $\mathcal{D}(T_0)$ such that $\phv_n\raw\phv$ and
$(T_0\phv_n)$ converges. Since $T$ is closed, it follows that $T_0\phv_n=T\phv_n\raw T\phv$, so that $\phv\in \mathcal{D}(T)$.
Hence $T_0^-\subset T$. Since both operators are self-adjoint, this implies $T_0^-=T$, which proves \er{idaisa}.

The proof of \er{idsgaminz} is easier since $(T-z)^{-1}$ is bounded: 
writing $$f(x)=(x-z)^{-1},$$ where $z\notin\sigma(T)$ is fixed and $x\in\sigma(T)$, we have 
$$f_0(T) h(T)\psi=(fh)(T)\psi=(T-z)^{-1}h(T)\psi,$$ and hence $$f_0(T)\phv=(T-z)^{-1}\phv$$ for any $\phv\in  \mathcal{D}(f_0(T))= C^*_c(S)\mathcal{H}$.
So if  $\phv_n\raw\phv$ for $\phv\in \CH$ and $\phv_n\in  \mathcal{D}(f_0(T))$, boundedness and hence continuity of the resolvent implies
$$f(T)\phv=\lim_{n\rightarrow\infty} f_0(T)\phv_n=\lim_{n\rightarrow\infty} (T-z)^{-1}\phv_n=(T-z)^{-1}\phv.$$
\subsection{Proof of Theorem \ref{T4}}
The first step consists in the observation that $T\eta A$ iff
 $TU\subset UT$ (or, equivalently, $UTU^*=T$) merely for each unitary $U\in A'$, which is well known \cite{Zsi}.
  
 The second step is to show that  $TU\subset UT$  iff $SU=US$ for any unitary $U$. This is a simple computation.
 First suppose that $UTU^*=T$. Then:
\begin{align*}
U(1+T^2)^{-1}U^*&=(U(1+T^2)U^*)^{-1}=((U+UT^2)U^*)^{-1}\\
&=(UU^*+UT^2U^*)^{-1}=(1+UTU^*UTU^*)^{-1}\\ &=(1+T^2)^{-1}.
\end{align*}
If $R$ is bounded and positive, then $UR=RU$ iff $U\in C^*(R)'$, and since
$\sqrt{R}\in C^*(R)$ by the continuous functional calculus, we also have
 $U\sqrt{R}=\sqrt{R}U$. Consequently, 
$$USU^*=U\left(T\sqrt{(1+T^2)^{-1}}\right)U^*=\left(UTU^*\right)\left(U\sqrt{(1+T^2)^{-1}}U^*\right)=T\sqrt{(1+T^2)^{-1}}=S.$$

Similarly, if $SU=US$, then
$$
UTU^*=US\sqrt{1-S^2}^{-1}U^*
=SU\sqrt{1-S^2}^{-1}U^*=S\left(U\sqrt{1-S^2}U^*\right)^{-1}=S\sqrt{1-S^2}^{-1}=T.
$$
Thirdly, as in the first step,  $SU=US$ for any unitary $U\in A'$ iff $S\in A''=A$.
\hfill $\Box$
\subsection{Proof of Theorem \ref{T3}}
Eq.\ \er{WstaraWstarb} in Theorem \ref{T3} follows from Theorem \ref{T4}: taking $A=W^*(T)$, so that $T\eta A$,  yields $S\in W^*(T)$, and hence $W^*(S)\subseteq W^*(T)$. On the other hand, taking $A=W^*(S)$, in which case $S\in A$, gives $T\eta W^*(S)$, and hence $W^*(T)\subseteq W^*(S)$. 

Similar to \er{ffu}, we have an isometric isomorphism 
\begin{equation}
\mathcal{B}_b(\sg(T))\stackrel{\cong}{\rightarrow} \mathcal{B}_b(\til{\sg}(S)), \:\: h\mapsto h\circ u, \label{ffu2}
\end{equation}
so that the first claim of Theorem \ref{T3} follows from the Borel functional calculus for the bounded operator $S$ \cite{Ped}. 
The proof of the last one is, \emph{mutatis mutandis}, practically  the same as in \cite[Theorem 5.3.8]{Ped}, so we omit the details; see \cite{Budde}.  \hfill $\Box$

As explained in \cite[\S 5.3]{Ped}, there exists a  Borel measure $\mu$ on $\sg(T)$ such that
the map $f\mapsto f(T)$ may also be seen as a so-called essential $\mbox{}^*$-homomorphism from
$\mathcal{B}(\sg(T))/\mathcal{N}(\sg(T))$ into the $\mbox{}^*$-algebra of normal operators affiliated with $W^*(T)$,
where $\mathcal{N}(\sg(T))$ is the set of $\mu$-null functions on $\sg(T)$. This remains true in our approach, with the same proof \cite{Budde}.
\section{Epilogue}
Let us finally note that although this paper was inspired by the work of Woronowicz, the $C^*$-algebraic affiliation relation he defines in  \cite{Wor1} (as did, independently, also Baaj and Julg \cite{Baaj})  has not been used here. If we call his relation $\eta'$ to avoid confusion with the $W^*$-algebraic relation $\eta$ we do use, if $A\subset B(\mathcal{H})$ we have $T\eta' A\Rightarrow T\in A$ (and hence $T$ is bounded), cf.\ \cite[Prop.\ 1.3]{Wor1}. Woronowicz does not define a \ca ic counterpart of the von Neumann algebra $W^*(T)$, but it might be reasonable to define $C^*(T)$ as the smallest \ca\ $A$ in $B(\mathcal{H})$ such that $T\eta' A$. It follows from \cite[Example 4]{Wor1} that this
 would give 
$C^*(T)=C^*_0(S)$,
 as defined in \eqref{defC0S}. This \ca\ contains $S$ (and hence $T$) if and only if $T$ is bounded, in which case  $C^*_0(S)=C^*(S)$ and hence $C^*(T)=C^*(S)$, as in our approach, cf.\ \er{CTCS}. Also in general (i.e., if $T$ is possibly unbounded), the bicommutant $C^*(T)''$ coincides with $W^*(T)$ as defined in the usual way \eqref{defWstarT} this follows from $C^*_0(S)''=C^*(S)''=W^*(S)$ and \er{WTWS}.

 Of course, we could also redefine $\eta'$, now calling it $\eta''$, by stipulating that  $T\eta'' A$  whenever $S\in A$, and redefine $C^*(T)$ accordingly (i.e., as the smallest \ca\ $A$ in $B(\mathcal{H})$ such that $T\eta'' A$). This would give  \er{CTCS}
  even if $T$ is unbounded, though in a somewhat empty way.

\end{document}